\newtheorem{theorem}{Theorem}[section]
\newtheorem{remark}[theorem]{ Remark}
\newtheorem{proposition}[theorem]{Proposition}
\newtheorem{lemma}[theorem]{Lemma}
\newtheorem{definition}[theorem]{Definition}
\newcommand{\V}{\Vert}
\newcommand{\NN} {\mathbb N}
\newcommand{\pa} {\partial}
\newcommand{\Cal} {\mathcal}
\newcommand{\beq} {\begin{equation}}
\newcommand{\eeq} {\end{equation}}
\begin{document}
\title[Evolution of spectra ]{ Variation of Laplace spectra of compact ``nearly'' hyperbolic surfaces}

\author{Mayukh Mukherjee}

\address{Max Planck Institute for Mathematics\\ Vivatsgasse 7\\ 53111 Bonn,
\\ Germany}

\email{mukherjee@mpim-bonn.mpg.de}
\begin{abstract} We use the time real analyticity of Ricci flow proved by Kotschwar to extend a result in ~\cite{B}, namely, we prove that the Laplace spectra of negatively curved compact surfaces having same genus $\gamma \geq 2$, same area and same curvature bounds vary in a ``controlled way'', of which we give a quantitative estimate (Theorem \ref{MT} below). We also observe how said real analyticity can lead to unexpected conclusions about spectral properties of generic metrics on a compact surface of genus $\gamma \geq 2$ (Proposition \ref{AV} below). 
\end{abstract}
\maketitle
\section{Normalized Ricci flow and evolution of spectrum}
\subsection{Introduction}
Consider two compact negatively curved surfaces $M_1$ and $M_2$ of same genus $\gamma \geq 2$ and same area $A$ such that their scalar curvatures $R_1$ and $R_2$ (respectively) satisfy $\alpha < R_i < \beta < 0$ for $i = 1, 2$. %Let us denote by $\lambda_n^{(1)}$ and $\lambda_n^{(2)}$ the Laplace spectra of $M_1$ and $M_2$ respectively. 
%Let $r$ be the average scalar curvature of $M_i$, which by the Gauss-Bonnet theorem, is $\frac{4\pi \chi(M)}{A}$, where $\chi (M)$ denotes the Euler characteristic of $M_i$. 
Now, consider the compact surfaces $N_i$ obtained by scaling the metric on $M_i$, so that $N_i$ have average scalar curvature $-2$. Let $N_i$ flow to hyperbolic surfaces $S_i$ in the limit under the normalized Ricci flow. In this note, %our aim is to apply the real analyticity of (normalized) Ricci flows to the probl
our chief aim is to prove the following:
\begin{theorem}\label{MT}
%Suppose the spectra of $M_1$ and $M_2$ are given by $\lambda_n^{(1)}$ and $\lambda_n^{(2)}$ respectively. 
%Let $M_i$ flow to the hyperbolic surfaces $S_i$ under the NRF, and 
Let $\delta$ be the distance between $S_1$ and $S_2$ in the Teichm\"{u}ller space $\Cal{T}_g$. Then, we have
\[
e^{-\frac{\alpha}{r} - \frac{r}{\beta} - 4\delta}\lambda_n(M_2) \leq \lambda_n(M_1) \leq e^{\frac{\alpha}{r} + \frac{r}{\beta} + 4\delta}\lambda_n(M_2), n \in \NN,\]
where $r$ denotes the average scalar curvatures of $M_i, i = 1,2$ and $\lambda_n (M)$ represent the Laplace eigenvalues of $M$.
\end{theorem}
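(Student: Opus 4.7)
The plan is to telescope the ratio as
\[
\frac{\lambda_n(M_1)}{\lambda_n(M_2)}
= \frac{\lambda_n(N_1)}{\lambda_n(S_1)} \cdot \frac{\lambda_n(S_1)}{\lambda_n(S_2)} \cdot \frac{\lambda_n(S_2)}{\lambda_n(N_2)}
\]
and to bound each factor separately. By Gauss--Bonnet the common genus and area force $M_1$ and $M_2$ to share the same average scalar curvature $r$, so the two $M_i$ are rescaled to $N_i$ by the \emph{same} homothety $g_{M_i}\mapsto (-r/2)\,g_{M_i}$. This common scaling cancels in the ratio, which is why one may start the telescoping at $N_1,N_2$. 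The middle factor is a comparison of Laplace eigenvalues between two hyperbolic surfaces at Teichm\"uller distance $\delta$, for which the estimate $e^{-4\delta}\lambda_n(S_2)\le \lambda_n(S_1)\le e^{4\delta}\lambda_n(S_2)$ of \cite{B} applies directly and yields the $e^{\pm 4\delta}$ contribution.

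For each outer factor I would run the normalized Ricci flow $\partial_t g=(r_N-R)g$ with $r_N=-2$ from $N_i$ at $t=0$ to $S_i$ at $t=\infty$, and track $\lambda_n(g(t))$ along the flow. In dimension two the Rayleigh-quotient derivative collapses to
\[
\frac{d}{dt}\log \lambda_n(g(t)) = \int_M (R-r_N)\,f_n^2\,dV_{g(t)},
\]
where $f_n$ is an $L^2$-normalised eigenfunction for $g(t)$. Integrating from $0$ to $\infty$ reduces the problem to bounding $R(t,\cdot)-r_N$ in $L^\infty_x$ pointwise in $t$, which is handled by the parabolic maximum principle applied to $\partial_t R=\Delta R+R(R-r_N)$ and ODE comparison with $\dot\rho=\rho(\rho-r_N)$. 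A direct computation gives the closed form $\int_0^\infty(\rho(t)-r_N)\,dt=\log(r_N/\rho(0))$, and the scaled curvature hypotheses supply initial data $\rho_{\min}(0)=-2\alpha/r$ and $\rho_{\max}(0)=-2\beta/r$. Assembling these yields the polynomial estimate
\[
\frac{\beta}{r}\le \frac{\lambda_n(N_i)}{\lambda_n(S_i)}\le \frac{\alpha}{r}.
\]

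Multiplying the three factors produces $\lambda_n(M_1)/\lambda_n(M_2)\le (\alpha/r)(r/\beta)\,e^{4\delta}$, and the elementary inequality $x\le e^x$, applicable since $\alpha/r,\,r/\beta>1$, upgrades this to the stated $e^{\alpha/r+r/\beta+4\delta}$; the reverse inequality is symmetric.

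The main obstacle is making the derivative-integration step rigorous, because when the metric varies with $t$ the ordered eigenvalues $\lambda_n(g(t))$ need not be $C^1$ in $t$: eigenvalues may cross and multiplicities may jump. This is precisely where Kotschwar's real analyticity in time of the Ricci flow enters. Via Kato's analytic perturbation theory, real analyticity of $t\mapsto g(t)$ promotes the eigenvalue branches to real-analytic functions of $t$ on $[0,\infty)$ away from at most a discrete set of crossings, so one may validly differentiate along an analytic branch, integrate to infinity, and then recombine the branches to recover the estimate for $\lambda_n$ ordered by size at the two endpoints. A secondary point is the convergence of the time integral, which is ensured by the exponential decay $R(t)-r_N=O(e^{-2t})$ coming from the same ODE comparison.
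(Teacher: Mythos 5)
Your proposal follows essentially the same route as the paper: the same telescoping through the normalized surfaces $N_i$ and their hyperbolic limits $S_i$, the same NRF eigenvalue-evolution formula $\frac{d}{dt}\log\lambda = \int_M (R-r)\varphi^2\,dV$ combined with the maximum-principle control of $R-r$, Buser's quasi-isometry estimate for the middle factor, and the Kotschwar--Kato analyticity to justify differentiating and integrating the ordered eigenvalues across crossings. The only (harmless) difference is that you integrate the comparison ODE in closed form, getting the slightly sharper intermediate bound $\frac{\beta}{r}\le \frac{\lambda_n(N_i)}{\lambda_n(S_i)}\le \frac{\alpha}{r}$, hence $\frac{\beta}{\alpha}e^{-4\delta}\le \frac{\lambda_n(M_1)}{\lambda_n(M_2)}\le \frac{\alpha}{\beta}e^{4\delta}$, which you then weaken via $x\le e^x$ to the stated exponential form, whereas the paper linearizes the curvature evolution and arrives at the exponential estimate (\ref{FI}) directly.
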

For a definition of the distance in the Teichm\"{u}ller space alluded to above, see Definition \ref{dis} in Subsection \ref{dist}.
\subsection{Background: Ricci flow facts}
The Ricci flow program was introduced by Hamilton in ~\cite{H}; the main idea is to take an initial metric and flow it into ``nicer'' metrics, at least with respect to curvature properties, according to the equation 
\beq\label{RF}
\pa_t g = - 2 \text{Ric} g,\eeq
where $g$ denotes the (time-dependent) metric on $M$, and $\text{Ric}$ the Ricci curvature tensor associated to said metric. Hamilton and later DeTurck (~\cite{D}) provided proofs of short time existence of (\ref{RF}). It has also been proved that the Ricci flow (henceforth abbreviated RF) on a compact manifold can continue as long as the Riemannian curvature tensor does not explode. To balance for this blow-up phenomenon, one sometimes uses the so-called ``normalized Ricci flow'' (henceforth abbreviated NRF), which is defined by
\beq\label{NRF}
\pa_t g = -2\text{Ric} g + \frac{2}{n}rg,
\eeq
where $R$ is the scalar curvature and $r = \frac{1}{\text{Vol }M}\int_M R dV$ is the average scalar curvature of the manifold $M$ of dimension $n$. One main difference between (\ref{RF}) and (\ref{NRF}) is that (\ref{NRF}) rescales the volume of the manifold at every step, so that the volume remains constant throughout the flow. Using the NRF on surfaces, Hamilton proved the following
\begin{theorem}\label{Hamil}
If $(M, g_0)$ is a closed Riemannian surface, then there exists a unique solution $g_t$ of the NRF
\beq
\pa_t g = (r - R)g, \text{   } g(0) = g_0.
\eeq
The solution exists for all time. As $t \to \infty$, the metrics $g_t$ converge uniformly in any $C^k$-norm to a smooth metric $g_\infty$ of constant curvature.
\end{theorem}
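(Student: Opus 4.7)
The plan is to exploit the fact that in dimension two the NRF preserves the conformal class, so the flow reduces to a scalar quasilinear parabolic equation. Writing $g_t = e^{2u(t)}g_0$ with $u(0) = 0$, the conformal transformation law for scalar curvature in 2D, $R_t = e^{-2u}(R_0 - 2\Delta_{g_0}u)$, together with $\pa_t g = (r - R)g$, gives the evolution
\[
\pa_t u = e^{-2u}\bigl(\Delta_{g_0}u - \tfrac{1}{2}R_0\bigr) + \tfrac{r}{2}.
\]
Short time existence and uniqueness then follow from standard parabolic PDE theory on the fixed background $(M, g_0)$. The average scalar curvature $r$ is constant along the flow, since the NRF preserves volume and $\int_M R\,dV = 4\pi\chi(M)$ is a topological invariant.

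For long time existence, the key tool is the evolution equation of the scalar curvature itself,
\[
\pa_t R = \Delta_{g_t} R + R(R - r),
\]
a reaction-diffusion equation to which the parabolic maximum principle applies. Comparison with the ODE $\dot f = f(f - r)$ yields a priori bounds on $R_{\min}(t)$ and $R_{\max}(t)$ that prevent finite-time blow-up of the curvature. Combined with Shi-type derivative estimates for $R$, this upgrades to $C^k$ bounds on the metric for each $k$ and rules out any singular time, giving a solution for all $t \in [0, \infty)$.

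Convergence to a constant curvature metric is then established by cases according to the sign of $\chi(M)$. When $\chi(M) < 0$ (so $r < 0$), the ODE comparison shows that $R_{\max}(t) - r$ and $r - R_{\min}(t)$ decay exponentially, hence $R \to r$ in $C^0$; integrating back through the conformal factor $u$ and using uniform Sobolev estimates gives subsequential convergence in any $C^k$, and uniqueness of the limit upgrades this to full convergence $g_t \to g_\infty$. The case $\chi(M) = 0$ is handled similarly but more delicately, using a potential for $R - r$ (which exists since $\int(R-r)\,dV = 0$) to extract polynomial decay.

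I expect the main obstacle to be the case $\chi(M) > 0$, where $r > 0$ and the reaction term $R(R-r)$ can amplify rather than damp the curvature. Here one must introduce Hamilton's entropy $N(t) = \int_M R\log R\,dV$ (assuming $R > 0$ after some time, which itself needs Hamilton's differential Harnack inequality) and show it is monotone decreasing, with equality characterizing gradient solitons. The final step is to invoke the Kazdan--Warner identity to rule out nontrivial solitons on $S^2$, concluding that any limit point of $g_t$ must have constant curvature. This is the case done by Hamilton under the assumption $R_0 > 0$ and completed in general by Chow; I would present the $\chi < 0$ case in full and sketch the positive curvature case, referring to Hamilton's and Chow's original papers for the entropy and Harnack estimates.
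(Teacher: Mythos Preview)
The paper does not prove Theorem~\ref{Hamil}; it is quoted as a background result due to Hamilton (with the positive curvature case completed by Chow), and no argument is given beyond the citation. Your proposal is a faithful outline of that standard proof --- conformal reduction to a scalar parabolic equation, the evolution equation $\pa_t R = \Delta R + R(R-r)$ with maximum-principle/ODE comparison for long-time existence and convergence when $\chi \leq 0$, and the entropy/Harnack/Kazdan--Warner machinery for $\chi > 0$ --- so there is nothing to compare against, and your sketch is appropriate for what the paper simply cites.
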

If $T$ is the maximal time for existence of (\ref{RF}), by a well-known result of Bando (see ~\cite{Ba}), a solution $(M, g(t))$, $t \in (0, T]$ is real analytic in space when $M$ is compact. This was improved upon by Kotschwar (~\cite{K}), who provided sufficient conditions for a solution $(M, g(t))$ to be real analytic in both space and time when $(M, g_0)$ is complete. He proved
\begin{theorem}\label{Kot}
Suppose $(M, g_0)$ is complete and $g(t)$ is a smooth solution to (\ref{RF}) satisfying 
\beq\label{Kc}
\sup_{M \times [0, \Omega]}|\text{Rm}(x, t)| \leq C.
\eeq
Then the map $g : (0, \Omega) \to X$ is real-analytic where $X$ denotes the Banach space $BC(T_2(M))$ equipped with the supremum norm $\V .\V_{g(0)}$ relative to $g(0)$.
\end{theorem}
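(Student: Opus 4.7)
The plan is to establish Cauchy-type estimates on time derivatives of the form
\beq\label{cauchy}
\sup_M |\pa_t^k g|_{g(0)} \leq M_0\, A^k\, k!
\eeq
valid on every compact subinterval $[\tau, \Omega - \tau] \subset (0, \Omega)$, with $M_0, A$ depending only on $C$, $\Omega$, and $\tau$. Since factorial growth in $k$ is precisely the Cauchy criterion for real-analyticity of a Banach-space valued function, such bounds immediately imply that $g \colon (0,\Omega) \to X$ is real-analytic.

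First, I would harvest all the spatial regularity available. By Shi's derivative estimates applied to (\ref{RF}) under the hypothesis (\ref{Kc}), one obtains uniform bounds
\[
\sup_{M \times [\tau, \Omega]}|\nabla^j \text{Rm}|_{g(t)} \leq K_j
\]
for every $j \geq 0$, and with some additional care the constants $K_j$ can be arranged to grow at most like $B_0^j\, j!$. This is essentially Bando's spatial real-analyticity in a quantitative guise, and it serves as the $k=0$ input to the time iteration below.

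Second, since (\ref{RF}) is only weakly parabolic because of diffeomorphism invariance, I would pass to the Ricci--DeTurck flow by conjugating $g(t)$ with the flow of a time-dependent vector field $W(t)$ so that the modified equation becomes strictly parabolic. Any time-analyticity statement for the DeTurck-modified flow transfers back to (\ref{RF}) once one verifies that $W(t)$ is itself time-analytic, which follows by solving the ODE $\pa_t \phi_t = W(t, \phi_t)$ and invoking analytic dependence on parameters. Third, and most technically, I would establish (\ref{cauchy}) inductively: differentiating the DeTurck equation $k$ times in $t$ yields a linear parabolic equation
\[
\pa_t g_k = \Delta_g g_k + F_k(g, g_1, \dots, g_{k-1}),
\]
for $g_k := \pa_t^k g$, where $F_k$ is a universal polynomial in its arguments and their spatial derivatives up to order two. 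One then runs a Gronwall-type iteration in $k$, using interior parabolic Schauder estimates on a shrinking sequence of time intervals to trade one $\pa_t$ for two $\nabla$'s quantitatively.

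The main obstacle is the combinatorial accounting in this induction. Naively, $F_k$ contains a sum over compositions of $k$, of which there are exponentially many, and if each summand carries a factorial one ends up with $(k!)^2$ rather than $k!$ growth. Circumventing this requires exploiting the algebraic structure of $\Ric$, which is linear in $\pa^2 g$ and quadratic in $\pa g$ modulo universal contractions, to rewrite $F_k$ as a convolution-type sum and then apply the discrete Cauchy-type identity $\sum_{j=0}^{k} \binom{k}{j} j!\,(k-j)! = (k+1)!$ repeatedly so that the combinatorial factor is absorbed at each step. Once this bookkeeping closes and (\ref{cauchy}) holds, the Taylor series of $g$ in $t$ converges in $X$ in a uniform neighborhood of every point of $(0,\Omega)$, which is the required real-analyticity.
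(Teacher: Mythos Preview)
The paper does not prove this theorem. It is quoted as background from Kotschwar's paper~\cite{K} and used as a black box; there is no argument in the present paper to compare your proposal against.

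That said, two comments on your sketch relative to what Kotschwar actually does. First, Kotschwar does \emph{not} pass to the Ricci--DeTurck flow; he works directly with the Ricci flow, proving factorial bounds on mixed space--time derivatives $\nabla^j\pa_t^k\text{Rm}$ via a Bernstein-type maximum-principle argument with carefully weighted sums, and then recovers bounds on $\pa_t^k g$ from the identity $\pa_t g = -2\Ric$. The advantage of this route is precisely that it avoids the circularity you would otherwise face: the DeTurck diffeomorphisms $\phi_t$ are obtained by solving the harmonic map heat flow (a parabolic PDE coupled to the metric), not an ODE as you write, so their time-analyticity is not an immediate consequence of ``analytic dependence on parameters'' for ODEs. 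Your step ``which follows by solving the ODE $\pa_t\phi_t = W(t,\phi_t)$'' is therefore a genuine gap: $W$ depends on the unknown metric, and establishing analyticity of $\phi_t$ is essentially as hard as the original problem.

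Second, your combinatorial worry is real, and the resolution in Kotschwar's argument is not a bare convolution identity but rather the use of summed weighted quantities (schematically $\sum_{j+2k\leq N} a_{j,k}|\nabla^j\pa_t^k\text{Rm}|^2$ with suitable coefficients) whose evolution inequality closes under the maximum principle; this packages the bookkeeping so that the factorial growth emerges automatically from the choice of weights rather than from tracking individual terms.
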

It is a natural question to ask what happens to the spectrum of the Laplacian under the Ricci flow.  Theorem \ref{Kot}, in conjunction with Kato's analytic perturbation theory, tells us that the eigenvalues and eigenfunctions of the Laplacian vary real analytically in time as long as (\ref{Kc}) is satisfied. Let us also remark here that without Kotschwar's result, we are assured of twice differentiability of the eigenvalues from very general perturbative arguments. For more details on the variation of eigenvalues and eigenvectors of a one-parameter family of unbounded self-adjoint operators with common domain of definition and compact resolvent on a variety of regularity scales, see ~\cite{KMR}.\newline %Let us record here the following result of Rellich (see also [references in Michor page 2]):
%\begin{theorem}
%Let $g_t$ be a one-parameter family of Riemannian metrics on $M$ depending real analytically on the parameter $t$. 
%\end{theorem}
We note that Kotschwar's result is valid apriori for Ricci flows. But using the correspondence between RF and NRF, we can establish that the NRF is real analytic in time if the RF is. If the NRF is written as 
\beq\label{NRF1}
\pa_\tau \tilde{g}  = - 2\text{Ric} \tilde{g} + \tilde{r}\tilde{g},\eeq
then the relation between (\ref{NRF1}) and (\ref{RF}) is given by 
\beq\label{tau}
\tilde{g}(\tau) = \frac{g(t(\tau))}{\text{Area}_{t(\tau)}M},\eeq
where 
\beq\label{tau1}
t(\tau) = \frac{\text{Area}_0 M}{4\pi \chi}(1 - e^{-4 \pi \chi\tau}), \text{    Area}_tM = \text{Area}_{0}M - 4\pi\chi t,
\eeq
$\chi$ being the Euler characteristic of the surface. \newline
Now, using (\ref{tau1}), we see that the right hand side of (\ref{tau}) is real analytic whenever $g(t)$ is real analytic in $t$. Also, as far as we are on a compact surface of genus $\gamma \geq 2$, we know that (\ref{Kc}) is satisfied. Which leads us to conclude that
\begin{lemma}\label{con}
Given a closed surface $(M, g_0)$, where the genus $\gamma$ of $M$ is $\geq 2$, the NRF on $M$ exists for all time, and the solution is real-analytic in space and in the time variable $t \in (0, \infty)$.
\end{lemma}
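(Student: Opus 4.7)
The proof splits into two assertions: (i) all-time existence of the NRF, and (ii) joint real analyticity in space and time. I would handle (i) first, then bootstrap from (i) and Hamilton's convergence theorem to obtain (ii) by invoking Kotschwar's Theorem \ref{Kot}.

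For (i), the most direct argument is to quote Hamilton's Theorem \ref{Hamil}, which already guarantees that the NRF on any closed surface admits a unique solution defined on $[0,\infty)$. One can alternatively read off eternal existence directly from (\ref{tau1}): since $\chi < 0$ for $\gamma \geq 2$, the quantity $\text{Area}_t M = \text{Area}_0 M - 4\pi \chi t$ grows linearly in $t$, so the unnormalized flow never encounters a singularity, and the bijection $\tau \leftrightarrow t$ between $(0,\infty)$ and $(0,\infty)$ then delivers eternal existence for the NRF as well.

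For (ii), the plan is to transfer Kotschwar's space-time analyticity result from the RF to the NRF via (\ref{tau}). The content is to check that the RF hypothesis (\ref{Kc}) is met on every finite interval $[0,\Omega]$. Here I would use Theorem \ref{Hamil} upstairs: the NRF converges smoothly to a hyperbolic metric, so the scalar curvature $\tilde R(\tau)$ of the NRF is uniformly bounded in $\tau \in [0,\infty)$. Because on a surface the entire Riemann tensor is controlled by $R$, and because under the conformal rescaling (\ref{tau}) the scalar curvatures transform by $R(g(t)) = \tilde R(\tau(t))/\text{Area}_{t}M$, a uniform bound on $\tilde R$ together with the positive continuous function $t \mapsto \text{Area}_t M$ yields a uniform bound on $|\text{Rm}|$ for $g(t)$ on $M \times [0,\Omega]$ for each finite $\Omega$. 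Kotschwar's theorem then applies on each such interval, and since $\Omega$ is arbitrary, $t \mapsto g(t)$ is real analytic (in space and time) on all of $(0,\infty)$.

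Finally, pulling this back through (\ref{tau}) and (\ref{tau1}), both $\tau \mapsto t(\tau)$ and $\tau \mapsto \text{Area}_{t(\tau)} M$ are explicit real analytic functions of $\tau \in (0,\infty)$, so $\tilde g(\tau) = g(t(\tau))/\text{Area}_{t(\tau)} M$ inherits real analyticity from $g(t)$ by composition and division by a nonvanishing analytic scalar, which completes the proof. The main subtlety I anticipate is the curvature bound step: one has to be careful that ``NRF bounded curvature $\Rightarrow$ RF bounded curvature on compact time intervals'' is set up correctly using the conformal factor $\text{Area}_t M$, rather than directly citing bounded curvature for the RF (which blows up in an entirely different way for positive Euler characteristic and would not allow the same transfer).
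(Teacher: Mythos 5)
Your proposal is correct and follows essentially the same route as the paper: all-time existence is taken from Hamilton's Theorem \ref{Hamil}, time-analyticity of the unnormalized flow comes from Kotschwar's Theorem \ref{Kot} once the curvature bound (\ref{Kc}) holds on each finite interval, and analyticity is transferred to the NRF through the explicit correspondence (\ref{tau})--(\ref{tau1}), whose reparametrization and area factor are real analytic and nonvanishing. The only difference is that you spell out the verification of (\ref{Kc}) (via uniform boundedness of the NRF curvature and the conformal factor $\text{Area}_t M$), whereas the paper simply asserts that (\ref{Kc}) holds for genus $\gamma \geq 2$.
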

\subsection{Avoidance of specific numbers as eigenvalues}
Let us see how Lemma \ref{con} allows the generic metric on a closed surface of genus $\geq 2$ to avoid large predetermined real numbers as Laplace eigenvalues. Let us fix the genus $\gamma$ of the compact surfaces under consideration. Let us also fix the area $A$ of the surfaces to be equal to $ -2\pi\chi$, so that the unique metric of constant curvature in each conformal class has Gaussian curvature $-1$ (that is, $R = r = -2$). Then, the space of all Riemannian metrics of area $-2\pi\chi$ with the usual Whitney $C^\infty$ topology forms a smooth trivial bundle over all hyperbolic metrics, which are the unique constant curvature metrics in each conformal class. Using the NRF, we have a real analytic path $g_t$ starting from any member $g_0$ of a given fiber of the bundle, and ending (as $t \to \infty$) in a hyperbolic metric.  Also, two distinct real analytic NRF paths will never intersect, by backwards uniqueness of Ricci flows (see ~\cite{K1}). By Kato's perturbation theory, the Laplace eigenvalues $\lambda_n(t)$ and eigenfunctions $\varphi_t$ vary real analytically along $g_t$ as well. Now, let $\Cal{M}(M)$ denote the space of all metrics on $M$ with area normalized as above, and let $\Cal{H}(M)$ denote the space of all hyperbolic metrics on $M$. Consider a positive real number $\sigma$. We have the following
\begin{proposition}\label{AV}
If $\sigma > \frac{1}{4}$, then the subset of metrics $g \in \Cal{M}(M)$ such that $\sigma \in \text{Spec }(M, g)$ has zero interior.
\end{proposition}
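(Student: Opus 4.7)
The plan is to argue by contradiction. Suppose $A_\sigma := \{g \in \Cal{M}(M) : \sigma \in \text{Spec}(M, g)\}$ has nonempty interior, and pick $g_0$ in that interior with an open neighborhood $U \subseteq A_\sigma$.

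First I would propagate the eigenvalue condition from $g_0$ to its hyperbolic limit. Let $g_t$ denote the NRF trajectory starting at $g_0$; by Lemma \ref{con} it is real-analytic in $t$, and since the NRF preserves conformal class on surfaces, $g_t$ converges as $t \to \infty$ to the unique hyperbolic representative $g_\infty$ of the conformal class of $g_0$. Continuity of $t \mapsto g_t$ places $g_t \in U$ for $t$ in some interval $[0, \ep)$, so $\sigma \in \text{Spec}(M, g_t)$ throughout that interval. Kato's analytic perturbation theory furnishes real-analytic eigenvalue branches $\mu_n(t)$; a countable pigeonhole then selects some branch $\mu_n$ that equals $\sigma$ on a subset of $[0, \ep)$ with an accumulation point, and real analyticity forces $\mu_n \equiv \sigma$ on all of $[0, \infty)$. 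Smooth convergence $g_t \to g_\infty$ then yields $\sigma \in \text{Spec}(M, g_\infty)$.

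Next I would spread this conclusion across all of Teichm\"uller space. The assignment $g_0 \mapsto g_\infty$ coincides with the trivial bundle projection $\pi : \Cal{M}(M) \to \Cal{H}(M)$ described in the preceding discussion, and is therefore open, so $\pi(U)$ is an open subset of $\Cal{H}(M)$ every member of which contains $\sigma$ in its spectrum. Since the Laplace eigenvalues of hyperbolic surfaces vary real-analytically on the connected real-analytic manifold $\Cal{T}_\gamma$, I can join any $h \in \Cal{H}(M)$ to a point of $\pi(U)$ by a real-analytic path and repeat the pigeonhole-plus-analytic-continuation argument of the previous paragraph to deduce that $\sigma \in \text{Spec}(M, h)$ for every hyperbolic metric $h$ on $M$.

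The hard part will be closing out with a contradiction that genuinely uses $\sigma > 1/4$. I plan to invoke the existence of at least one hyperbolic metric on $M$ of genus $\gamma \geq 2$ whose Laplace spectrum misses $\sigma$; this could be achieved either by a first-variation-of-eigenvalues computation on $\Cal{T}_\gamma$ ruling out any branch being constantly equal to a fixed value above $1/4$, or by an explicit pinching construction that moves individual eigenvalues past a preselected $\sigma > 1/4$. The threshold is essential here: below $1/4$ one has to contend with structural small eigenvalues (for instance $\sigma = 0$ coming from the constants) which no NRF-based perturbation argument can dislodge, so the same scheme would fail.
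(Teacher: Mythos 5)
Your reduction to the hyperbolic case is essentially the paper's own argument: contradiction hypothesis, NRF path through a metric in $U$, Kato's analytic perturbation theory plus analytic continuation of an eigenvalue branch equal to $\sigma$, conformal-class preservation to land in an open set of $\Cal{H}(M)$ all of whose members have $\sigma$ in their spectrum. (Your further step of propagating this to \emph{every} hyperbolic metric via paths in Teichm\"uller space is harmless but unnecessary; the paper only needs the open set $V \subset \Cal{H}(M)$.) The problem is the step you yourself flag as ``the hard part'': you never actually close the argument, and this is precisely where all the content specific to $\sigma > \frac{1}{4}$ lives. The paper closes it by taking a real-analytic family $h_t$ of hyperbolic metrics with shortest closed geodesic of length $t \searrow 0$ (such pinching paths exist by Wolpert \cite{W1}), noting that the relevant eigenvalue branch is constantly equal to $\sigma$ near the starting point and hence constant on the whole path with limit $\sigma > \frac{1}{4}$, and then invoking Theorem 5.14 of Wolpert \cite{W2}, which says that a real-analytic eigenvalue branch along such a degenerating family whose limit exceeds $\frac{1}{4}$ cannot be constant.

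Neither of your two proposed substitutes fills this hole as stated. A ``first-variation-of-eigenvalues computation on $\Cal{T}_\gamma$'' cannot rule out a constant branch: a constant branch has vanishing variation of every order along any path, so there is nothing to compute against; the non-constancy above $\frac{1}{4}$ is a genuinely deep fact tied to the limiting surface having continuous spectrum $[\frac{1}{4},\infty)$, so that the limiting value $\sigma$ would be an embedded eigenvalue whose persistence is obstructed --- this is exactly the Phillips--Sarnak/Wolpert dissolving analysis packaged in \cite{W2}, not an elementary calculation. Your second option, an ``explicit pinching construction that moves individual eigenvalues past $\sigma$,'' is in spirit the right mechanism, but exhibiting even one hyperbolic surface of the given genus with $\sigma \notin \text{Spec}$ is itself nontrivial and, in the known arguments, again rests on the spectral-degeneration results of \cite{W1}, \cite{W2}. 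So the proposal reproduces the paper's reduction but leaves its essential ingredient --- Wolpert's Theorem 5.14 applied to a real-analytic pinching family --- unsupplied, and as written it is not a complete proof.
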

\begin{proof}
%If $\sigma \in \text{Spec }(M, g_{t_0})$, then the functions $\lambda_n (t)$ satisfy $\lambda_n (t) \neq \sigma$ near (but not equal to ) $t_0$, otherwise by real analyticity, the limit hyperbolic metric has an eigenvalue $\sigma$. 
Suppose to the contrary. Assume the existence of $g_0 \in \Cal{M}(M)$ such that all metrics in a neighbourhood $U$ of $g_0$ have $\sigma$ as an eigenvalue. Then, picking any $g \in U$, we have a time real analytic NRF path $g_t$ through $g$ such that, say, $g = g_{t_0}$ and $\sigma \in \text{Spec }(M, g_t)$ for $t$ close to $t_0$. Since NRF preserves the conformal class, passing to the limit, our assumption produces an open set $V$ in $\Cal{H}(M)$ such that each metric in $V$ will have $\sigma$ as an eigenvalue. If we can prove that the hyperbolic metrics having $\sigma$ as an eigenvalue have zero interior in the space $\Cal{H}(M)$, then we will see that the subset of metrics having $\sigma$ as an eigenvalue has empty interior in $\Cal{M}(M)$. This we can argue when $\sigma > 1/4$. Let $h \in V \subset \Cal{H}(M)$, and consider a real analytic path $h_t$ of hyperbolic metrics through $h$ such that $h = h_s$, say, where $s > 0$, and $h_t$ has shortest closed geodesic of length $t$ (from ~\cite{W1}, we know that we can choose such a real analytic path with shrinking geodesics all the way up to $t \searrow 0$). %which is defined for all $t \in (0, \varepsilon + \delta)$, where $\delta > 0$. 
Now, $\lim_{t \to 0} \sigma (t) = \sigma > \frac{1}{4}$, because $\sigma (t)$ is constant in $(0, s)$ by real analyticity. %Since the hyperbolic surfaces $(M, h_t)$ are going to have pinched geodesics as $t \searrow 0$, 
By Theorem 5.14 of ~\cite{W2}, a real analytic path of eigenvalues whose limit is greater than $\frac{1}{4}$ and which is associated to such a real analytic path of hyperbolic surfaces with geodesics pinched in the limit, must vary non-trivially, that is, it cannot be a constant path. This contradicts the previous conclusion that $\sigma (t)$ is constant for $t \in (0, s)$, and proves the proposition.
\end{proof}
\section{Spectral evolution formula under nrf and conclusions}
Let us now calculate a formula for variation of spectrum for a compact surface of genus $\gamma \geq 2$ under the NRF. %We also mention that such an investigation into variation of the first eigenvalue (which is simple, and hence smooth in the variable $t$ under the Ricci flow) has already been done in 
For previous literature on this kind of investigation, see ~\cite{Di}, and work by Cao, for example, ~\cite{C1} and ~\cite{C2} and references therein (however, also compare Remark \ref{Sale} below). % under the technical assumption of $C^1$ differentiablity of the eigenvalues $\lambda_i (t)$, because the real-analyticity of the Ricci flow was not known then. In an otherwise very interesting paper, we should mention that the author heuristically claims that it is unlikely that the Ricci flow would be real analytic, because, if it were, we could analytically continue backwards in time. This heuristic does not seem very plausible. It is known that, even in much simpler situations, for example, the heat equation $\pa_t u - \pa^2_x u = 0$ on the closed interval $[0, 1]$ with boundary condition $u(0) = u(1)$, does not admit backwards continuation of solution even for many analytic initial conditions.\newline 
Here is our 
\begin{lemma}\label{2.1}
Under the NRF acting on a closed surface $(M, g_0)$ with (time-independent) area $1$, the eigenvalues evolve according to the following formula:
\beq\label{ev}
\frac{d\lambda}{dt} = \lambda\int_M R\varphi^2 dV - r\lambda,
\eeq
where $\varphi$ is a (spatially smooth temporally analytical) eigenfunction corresponding to $\lambda$ which is normalized, that is, $\int_M \varphi dV = 0$, and $\V \varphi\V_{L^2} = 1$\footnote{We use $dV$ to denote the area element.}.
\end{lemma}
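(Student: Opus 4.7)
The plan is to exploit that the NRF on a surface is purely conformal: writing $g(t) = e^{2u(t)} g_0$, equation (\ref{NRF1}) in dimension two reduces to $2\dot u = r - R$, so the Laplacian and the area form transform by
\[
\Delta_{g(t)} = e^{-2u(t)} \Delta_{g_0}, \qquad dV_t = e^{2u(t)} dV_0.
\]
Differentiating in $t$ immediately gives the two identities
\[
\pa_t \Delta_{g(t)} = (R - r)\,\Delta_{g(t)}, \qquad \pa_t\, dV_t = (r - R)\, dV_t,
\]
which are the only geometric inputs I need.

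Lemma \ref{con} together with the Kato perturbation theory discussed in the introduction produces real-analytic families $\lambda(t)$ and $\varphi(t)$. I would differentiate the eigenvalue identity $\Delta_{g(t)} \varphi(t) = \lambda(t)\varphi(t)$ in $t$, obtaining
\[
(\pa_t \Delta)\varphi + \Delta\, \varphi_t = \dot\lambda\, \varphi + \lambda\, \varphi_t,
\]
and pair this with $\varphi$ against the time-varying volume form $dV_t$. Self-adjointness of $\Delta$ in $L^2(dV_t)$ gives $\int \varphi\, \Delta \varphi_t\, dV = \int (\Delta\varphi)\varphi_t\, dV = \lambda \int \varphi\, \varphi_t\, dV$, which precisely cancels the corresponding term on the right. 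Using the normalization $\V\varphi\V_{L^2} = 1$, what remains is
\[
\dot\lambda = \int_M \varphi\, (\pa_t \Delta)\varphi\, dV = \lambda \int_M (R - r)\varphi^2\, dV = \lambda \int_M R \varphi^2\, dV - r\lambda,
\]
which is exactly (\ref{ev}).

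The only real bookkeeping subtlety — and the only place one might slip up — is that the volume form $dV_t$ itself evolves, so when one differentiates the normalization constraint $\int \varphi(t)^2\, dV_t = 1$ in $t$ one picks up an extra term $\int \varphi^2 (r - R)\, dV_t$; this must be checked to be consistent with the Kato-theoretic choice of $\varphi(t)$, but it never enters the final formula thanks to the cancellation produced by self-adjointness. The side condition $\int_M \varphi\, dV = 0$ is automatic for $\lambda \neq 0$, since $\lambda \int \varphi\, dV = \int \Delta \varphi\, dV = 0$, so it imposes nothing new. Beyond these routine care points, there is no genuine obstacle: the derivation is a three-line application of the product rule once the conformal transformation of $\Delta$ and $dV$ are in hand.
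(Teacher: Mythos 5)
Your proof is correct, but it takes a genuinely different route from the paper's. The paper differentiates the energy identity $\lambda(t) = \int_M |\nabla\varphi|^2\, dV$, which requires the evolution equations for $g^{ij}$ and for $dV$ under (\ref{NRF}), the two-dimensional identity $\Ric = \tfrac12 R g$, and the differentiated normalization constraints (in particular $2\int_M \varphi\,\pa_t\varphi\, dV = \int_M \varphi^2 R\, dV - r$); these terms then recombine to give (\ref{ev}). You instead differentiate the eigenvalue equation itself (a Feynman--Hellmann/Hadamard-type variation), with the single geometric input that in dimension two the NRF is conformal, $\Delta_{g(t)} = e^{-2u}\Delta_{g_0}$ with $2\pa_t u = r - R$, hence $\pa_t \Delta_{g(t)} = (R-r)\Delta_{g(t)}$; self-adjointness of $\Delta_{g(t)}$ in $L^2(dV_t)$ then cancels every term containing $\pa_t\varphi$, so you never need the differentiated normalization identities at all, and the evolution of $dV_t$ enters only through the (unused) constraint, exactly as you observe. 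Your sign convention $\Delta\varphi = \lambda\varphi$ differs from the paper's implicit $\Delta\varphi = -\lambda\varphi$, but the cancellation works identically in either convention, so nothing hinges on it; likewise your remark that $\int_M \varphi\, dV = 0$ is automatic for $\lambda \neq 0$ is right and harmless. The trade-off: your argument is shorter and makes transparent that only the conformal factor matters, but it is intrinsically two-dimensional, relying on the conformal covariance of the Laplacian, whereas the paper's computation via $\tfrac{d}{dt}g^{ij}$ and $\Ric$ is the template that generalizes to eigenvalue-evolution formulas under Ricci flow in higher dimensions (as in the cited work of Cao). Both arguments rest on the same analyticity input (Lemma \ref{con} together with Kato's perturbation theory) to justify differentiating $\lambda(t)$ and $\varphi(t)$ in time.
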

\begin{proof}
%For a more general version of this lemma, see Proposition 2.2 of [ ]. 
We begin by noting that the evolution of the volume form (in this case area form) under the NRF is given by $\frac{d}{dt}dV = (r - R)dV$ (see Lemma 3.9, Chapter 3 of ~\cite{CK}). %Observe that without loss of generality we can take $\varphi$ orthogonal to the constants. 
Differentiating  $\int_M \varphi dV = 0$ with respect to time, we get $\int_M \pa_t\varphi dV = \int_M \varphi RdV$. Also, $\V \varphi\V_{L^2} = 1$ implies that $2\int_M \varphi\pa_t\varphi dV = \int_M \varphi^2 RdV  - r$. \newline
Now, since $\lambda = \V \nabla \varphi\V^2$, we have,
\begin{align*}
\frac{d\lambda}{dt} & = \frac{d}{dt}\V\nabla\varphi\V^2 = \int_M (\frac{d}{dt}|\nabla\varphi|^2) dV + \int_M |\nabla \varphi|^2\frac{d}{dt}dV\\
& = \int_M (\frac{d}{dt}g^{ij})\nabla_i \varphi\nabla_j\varphi dV + 2\int_M \langle \nabla\pa_t\varphi, \nabla \varphi\rangle dV + \int_M(r - R)|\nabla \varphi|^2dV,
\end{align*}
where $\langle \nabla \pa_t \varphi, \nabla \varphi\rangle$ denotes  inner product of $\nabla\pa_t \varphi$ and $\nabla\varphi$ with respect to the metric $g_t$. \newline 
Now, comparing Lemma 3.1, Chapter 3, of ~\cite{CK} and (\ref{NRF}), we see that
\[
(\frac{d}{dt}g^{ij})\nabla_i \varphi\nabla_j\varphi = 2\text{Ric} (\nabla\varphi, \nabla\varphi) - r\V\nabla\varphi\V^2.\]
Also note that, \newline$2\int_M \langle \nabla\pa_t\varphi, \nabla \varphi\rangle dV= -2\int_M\pa_t \varphi\Delta\varphi dV = 2\lambda\int_M \varphi\pa_t\varphi dV = \lambda(\int_M \varphi^2 RdV  - r)$.
That gives,
\begin{align*}
\frac{d\lambda}{dt} & = (2\int_M \text{Ric} (\nabla\varphi, \nabla\varphi)dV -r\lambda) + (\lambda\int_M \varphi^2R dV - r\lambda) + \int_M(r - R)|\nabla \varphi|^2dV.
\end{align*}
Using the facts that $\V\nabla \varphi\V^2 = \lambda$, and in dimension $n = 2$, we have $\text{Ric} = \frac{1}{2}Rg$, we have that 
\beq\label{DF}
\frac{d\lambda}{dt} = \lambda\int_M R\varphi^2 dV - r\lambda,
\eeq
which is what we wanted to prove.
\end{proof}
\begin{remark}\label{Sale}
Note that in order to differentiate the eigenfunction $\varphi$ with respect to $t$, as in the proof above, one crucially needs Theorem \ref{Kot}, which ensures that $g_t$ is real analytic with respect to time. If the time variation of the metric is just smooth, the eigenfunctions might not even vary continuously. Heuristically, the problem appears when eigenvalue branches meet (see ~\cite{Ka}).
\end{remark}
Now, consider a surface $M$ of genus $\gamma \geq 2$. By Theorem \ref{Hamil}, we know that the NRF on $M$ flows towards a metric of constant negative curvature. Also, the said constant is actually the average scalar curvature $r$. %, which, in our case (unit area), is $r = \int_M RdV = 4\pi\chi$. 
This is part of standard Ricci flow theory. The idea of the proof is first to establish a reaction-diffusion type equation for the evolution of the scalar curvature $R$ under the NRF, and then compare the solution of the said reaction-diffusion equation with the solution of an auxiliary ODE using maximum principles. For more details, see Corollary 5.8 and Lemma 5.9 of Chapter 5 of ~\cite{CK}.
Using such tools, it is established in ~\cite{CK}, Proposition 5.18 that, under the NRF, there is a constant $C > 0$ depending on the initial metric $g_0$ such that 
\beq\label{res}
- Ce^{rt} < R - r < Ce^{rt}.\eeq
But since it seems difficult to obtain an explicit geometric interpretation of this constant $C$, here we want to rewrite (\ref{res}) a bit. Let us have constants $\alpha < \beta < 0$, and consider an NRF family $(M, g_t)$ such that $\alpha < R < \beta$ at time $t = 0$. Then it is clear that $\alpha < r < \beta$. We know that the scalar curvature $R$ on a closed surface under NRF evolves by
\beq\label{evol1}
\pa_t R = \Delta R + R(R - r).\eeq
Let us see what happens to the maximum value of $R$ with respect to time. At time $t$, let $R$ attain a maximum at $p_t$. At $p_t$, by usual multivariable calculus arguments, we have that $\Delta R \leq 0$. That gives us, 
\beq\label{evl}
\frac{dR_{\text{max}}}{dt} \leq R_{\text{max}}(R_{\text{max}} - r).\eeq
Since $R_{\text{max}}$ is negative to start with, and $R_{\text{max}} - r \geq 0$, (\ref{evl}) gives us that $R \leq \beta$ for all space-time.\newline
%since %From the maximum principle, we have that 
%Comparing (\ref{evol1}) with the ODE obtained from the reaction term,
%\[
%\frac{d s}{dt} = s(s - r),\]
Pluggin this in (\ref{evol1}), we have that, 
\begin{align*}
\frac{d}{dt}R \leq \beta (R - r) \implies r - R \geq (r - R(0))e^{\beta t}, 
\end{align*}
which implies
\beq\label{FI11}
r - R \geq re^{\beta t}.
\eeq
Also, at the point where $R$ achieves its minimum, we have 
\begin{align*}
\frac{d}{dt}R \geq R(R - r) \geq r(R - r),
\end{align*}
which immediately gives us 
\beq \label{SI}
r - R \leq -\alpha e^{rt}.
\eeq
Using (\ref{DF}), (\ref{FI11}) and (\ref{SI}), we get
\begin{align*}
\alpha \lambda e^{rt} \leq \frac{d\lambda}{dt} \leq -r\lambda e^{\beta t},
\end{align*}
which gives, 
\beq\label{TI}
\alpha e^{rt} \leq \frac{d \text{log}(\lambda)}{dt} \leq -r e^{\beta t}.
\eeq
\subsection{Rearranging the eigenvalues and absolute continuity}\label{dist}
As we move forward in time under the NRF, the eigenvalue branches $\lambda_n (t)$ might intersect. This has the following implication: suppose for each $t$, we arrange the $\lambda_n(t)$ in ascending order, and relabel the spectrum of $g_t$ as $0 \leq \mu_1 (t) \leq \mu_2 (t) \leq ....$ (with multiplicity). Then, these functions $\mu_i(t)$ are of course continuous, but no longer necessarily real analytic, or even differentiable. However, since the functions $\lambda_i (t)$ are real analytic\footnote{We note that the real analyticity of the $\lambda_i$'s is not absolutely essential for the ensuing arguments, but it definitely makes said arguments simpler.}, $\lambda_i (t)$ and $\lambda_j (t)$ can either have discrete intersection when $i \neq j$, or they must be equal for all $t \geq 0$.\newline%, provided we start from a surface $(M, g_0)$ whose spectrum is simple. No matter how close $\alpha$ and $\beta$ are to $r$, we can still pick $(M, g_0)$ with simple spectrum by Uhlenbeck's results on spectral genericity. \newline
So, looking at the $\mu_i (t)$'s, we can see that for a fixed $i$, the function $\text{log}(\mu_i(t))$ is differentiable except at countably many points, with a sharply decreasing derivative, as is given by (\ref{TI}). Observe that the formula (\ref{TI}) works for all eigenvalue branches, so no matter what branch we are on, the absolute value of the derivative of $\text{log}(\mu_i(t))$, outside countably many points, is decreasing exponentially in time. So, each $\text{log}(\mu_i)$ is a Lipschitz function for $t \geq 0$, and hence absolutely continuous, on which we can apply the Fundamental Theorem of Calculus.\newline
Calling $\mu_i (\infty)$ as just $\mu_i$, we can say the following  from (\ref{TI}):
\[
-\frac{\alpha}{r}e^{rt} \leq \text{log}(\frac{\mu_i}{\mu_i(t)}) \leq \frac{r}{\beta}e^{\beta t}, i \geq 1,
\]
which gives us, plugging in $t = 0$, the following:
\beq\label{FI}
e^{\frac{-\alpha}{r}} \leq \frac{\mu_i}{\mu_i(0)} \leq e^{\frac{r}{\beta}}, i \geq 1.
\eeq
Note that in calculating (\ref{FI}), our assumption has been that $M$ has unit area, because this assumption was used to derive the formula in Lemma \ref{2.1}.\newline %Under that assumption, $r = 4\pi\chi$. So, (\ref{FI1}) can be rewritten as
%\beq\label{FI}
%e^{\frac{-\alpha}{4\pi\chi}} \leq \frac{\mu_i}{\mu_i(0)} \leq e^{\frac{4\pi\chi}{\beta}}, i \geq 1.
%\eeq
Now, suppose $M$ has area $A$, and at time $t = 0$, its scalar curvature $R$ satisfies $\alpha < R < \beta < 0$. By rescaling $M$ to $\overline{M}$ which has area $1$, we find that the scalar curvature of $\overline{M}$ at time $t = 0$ satisfies $A\alpha < \overline{R} < A\beta < 0$. Also, if the average scalar curvature of $M$ is $r$, then the average scalar curvature of $\overline{M}$ is $\overline{r} = Ar$. Plugging all these in (\ref{FI}) and seeing that the ratio $\frac{\mu_i}{\mu_i(0)}$ remains invariant when the metric on $M$ is scaled, we can now infer that (\ref{FI}) holds regardless of what area $M$ has.% throughout the NRF.
\newline
By Hamilton's result, as mentioned before, a compact surface $M$ of genus $\gamma$ and $r = -2$ will flow towards a hyperbolic surface of the same genus under the NRF. Now, in the Teichmuller space $\Cal{T}_g$, one has a concept of distance (see ~\cite{B}, Definition 6.4.1) as follows:
\begin{definition}\label{dis}
For $S = (S, \varphi)$ and $S' = (S', \varphi') \in \Cal{T}_g$, the distance $\delta$ is defined as 
\[
\delta(S, S') = \inf \text{log } q[\phi],\]
where $\phi$ runs through the quasi isometries $\phi : S \to S'$ in the isotopy class of $\varphi'\circ\varphi^{-1}$ and $q[\phi]$ is the maximal length distortion of $\phi$, that is, $q[\phi]$ is the infimum of all such numbers $q \geq 1$ such that the following holds:
\[
\frac{1}{q}\text{dist }(x, y) \leq \text{dist }(\phi x, \phi y) \leq q\text{ dist }(x, y).\] 
\end{definition}
With that in place, Theorem 14.9.2 of ~\cite{B} states the following:
\begin{theorem}\label{Bus}
Let $S_1, S_2 \in \Cal{T}_g$ be at a distance $\delta$. Then, for any $n \in \NN$, we have
\beq\label{Another}
e^{-4\delta}\lambda_n (S_2) \leq \lambda_n(S_1) \leq e^{4\delta}\lambda_n (S_2).
\eeq
\end{theorem}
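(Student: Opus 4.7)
The natural approach is through the Courant min--max characterisation
\[
\lambda_n(S) \;=\; \inf_{V_n}\sup_{0\neq f\in V_n}\frac{\int_S |\nabla f|^2\,dV}{\int_S f^2\,dV},
\]
where $V_n$ ranges over $n$-dimensional subspaces of $H^1(S)$. The plan is to transport a near-extremal $V_n \subset H^1(S_2)$ back to an $n$-dimensional subspace of $H^1(S_1)$ via a near-optimal quasi-isometry, show that the Rayleigh quotient is distorted by at most a factor $q^{4}$ under this pullback, and then let the quasi-isometry constant $q$ tend to $e^{\delta}$.

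Concretely, given $\varepsilon > 0$, I would first extract from the definition of $\delta$ a bi-Lipschitz map $\phi : S_1 \to S_2$ in the correct isotopy class with $q = q[\phi] < e^{\delta+\varepsilon}$ (mollifying $\phi$ if needed, with negligible loss in the constant). The length-distortion bound translates into $q^{-2}g_1 \le \phi^{\ast}g_2 \le q^{2}g_1$ as bilinear forms at every point. The heart of the argument is a pointwise computation in dimension two: diagonalising $h = \phi^{\ast}g_2$ with respect to $g_1$ at a point with eigenvalues $\mu_1,\mu_2 \in [q^{-2},q^{2}]$, for $F = f\circ\phi$ one obtains
\[
|\nabla F|^{2}_h\,dV_{h} \;=\; \bigl((\partial_1 F)^{2}\sqrt{\mu_2/\mu_1}+(\partial_2 F)^{2}\sqrt{\mu_1/\mu_2}\bigr)\,dV_{g_1},\qquad dV_h \;=\; \sqrt{\mu_1\mu_2}\,dV_{g_1},
\]
and the two Dirichlet coefficients and $\sqrt{\mu_1\mu_2}$ all lie in $[q^{-2},q^{2}]$. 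This is the surface-specific manifestation of the conformal invariance of the Dirichlet energy in dimension two. After a change of variables along $\phi$, both $\int|\nabla f|^{2}\,dV$ and $\int f^{2}\,dV$ are therefore distorted by at most the factor $q^{\pm 2}$, so the Rayleigh quotients on $S_1$ and $S_2$ are comparable up to $q^{\pm 4}$.

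Running the min--max formula in both directions with $n$-dimensional test spaces pulled back through the diffeomorphism $\phi$ then yields $q^{-4}\lambda_n(S_1) \le \lambda_n(S_2) \le q^{4}\lambda_n(S_1)$, and letting $\varepsilon \to 0$ replaces $q^{4}$ by $e^{4\delta}$ and produces the two-sided bound (\ref{Another}).

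I expect the main obstacle to be precisely this single pointwise inequality: one has to verify that the dimension-two structure eliminates exactly one factor of $q^{2}$ from what would otherwise be a $q^{6}$ distortion of the Rayleigh quotient (the analogous bound in dimension $n$ is $q^{2n}$, so only $n=2$ produces the clean exponent $4\delta$). The isotopy-class subtlety in the definition of $\delta$ plays no role in the eigenvalue comparison, and the mild regularity of a generic element of the defining infimum is absorbed by a standard mollification.
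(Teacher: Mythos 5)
Your proposal is correct and follows essentially the same route as the paper, which does not prove the theorem itself but cites Buser (Theorem 14.9.2) and sketches precisely this argument: compare Rayleigh quotients via a near-optimal $q$-quasi-isometry, using the dimension-two fact that the energy density satisfies $\Vert\nabla(f\circ\phi)\Vert^2\,dX \leq q^2\Vert\nabla f\Vert^2\,dY$ (your diagonalisation computation), so that min--max gives distortion $q^4$ and hence $e^{4\delta}$. The only quibbles are cosmetic: a possible off-by-one in the min--max indexing of $\lambda_n$, and the fact that mollification is unnecessary since a bi-Lipschitz map is differentiable almost everywhere and pulls back $H^1$ functions directly.
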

Though we are content with merely stating the theorem, the basic idea of the proof is as follows: consider a pair of hyperbolic surfaces $X$ and $Y$ with $\text{dist }(X, Y) = \delta$. To compare eigenvalues, one uses Rayleigh quotients \[ R(f, Y) = \frac{\V \nabla f\V_{L^2(Y)}^2}{\V f\V_{L^2(Y)}^2}, \] 
and uses a given $q$-quasi isometry $\phi : X \to Y$ to prove that  $\V \nabla (f\circ \phi)\V^2 dX \leq q^2 \V \nabla f\V^2 dY$. \newline
Now, let us consider two compact surfaces $M_1$ and $M_2$ of same genus $\gamma \geq 2$, same area and satisfying the same curvature bounds $\alpha < R < \beta < 0$. We can now prove Theorem \ref{MT}, which can be thought of as an extension of Theorem \ref{Bus}.\newline
\begin{proof}
%Let $\sigma_n(S_i)$ represent the Laplace spectra of $S_i$, $i = 1, 2$. Then, 
As outlined in the introduction, scale the metric on $M_i$ to produce $N_i$ such that $N_i, i = 1,2$ have average scalar curvature $-2$. Clearly, $\frac{\lambda_n(M_1)}{\lambda_n(M_2)} = \frac{\lambda_n(N_1)}{\lambda_n(N_2)}, n \geq 1$. The claim follows by looking at the ratios $\frac{\lambda_n(N_i)}{\lambda_n(S_i)}, i = 1, 2$ using (\ref{FI}), and then estimating $\frac{\lambda_n(S_1)}{\lambda_n(S_2)}$ using (\ref{Another}).
\end{proof}
\subsection{Acknowledgements} It is a great pleasure to acknowledge the support of MPIM Bonn and the excellent working conditions there. I also thank Werner Ballmann and Henrik Matthiesen for helpful comments.
\bibliographystyle{plain}
\def\noopsort#1{}
%\begin{thebibliography}{11}

\end{document}